\numberwithin{equation}{section}
\def\bR{\mathbb{R}}
\def\bE{\mathbb{E}}
\def\bP{\mathbb{P}}
\def\bN{\mathbb{N}}
\def\cC{\mathcal{C}}
\def\cD{\mathcal{D}}
\def\cF{\mathcal{F}}
\def\cN{\mathcal{N}}
\def\limlaw{\buildrel \cD\over\rightarrow}
\def\tbf{\textbf }
\def\be{\begin{equation}}
\def\ee{\end{equation}}
\def\rhs{r.h.s.\ }
\def\lhs{l.h.s.\ }
\def\eg{e.g.\ }
\def\st{s.t.\ }
\def\wrt{w.r.t.\ }
\def\iid{i.i.d.\ }
\newtheorem{theorem}{Theorem}   
\newtheorem{prop}[theorem]{Proposition}
\newtheorem{lemma}[theorem]{Lemma}
\begin{document}
\title{Spin Covariance Fluctuations in the SK Model\\ at High Temperature}
\author{Christian Brennecke\thanks{Institute for Applied Mathematics, University of Bonn, Endenicher Allee 60, 53115 Bonn, Germany} 
\and Adrien Schertzer\footnotemark[1]
\and  Chen Van Dam\footnotemark[1]}
\maketitle
\begin{abstract}
Based on \cite{H}, it is well known that the rescaled two point correlation functions
        \[  \sqrt{N} \langle \sigma_i ; \sigma_j\rangle  =  \sqrt{N} \big( \langle \sigma_i \sigma_j\rangle -\langle \sigma_i\rangle \langle \sigma_j\rangle\big)  \]
in the Sherrington-Kirkpatrick spin glass model with non-zero external field admit at sufficiently high temperature an explicit non-Gaussian distributional limit as $N\to \infty$. Inspired by recent results from \cite{ABSY, BSXY, BXY}, we provide a novel proof of the distributional convergence which is based on expanding $\langle \sigma_i ; \sigma_j\rangle$ into a sum over suitable weights of self-avoiding paths from vertex $i$ to $j$. Compared to \cite{H}, our key observation is that the path representation of $\langle \sigma_i ; \sigma_j\rangle$ provides a direct explanation of the specific form of the limiting distribution of $\sqrt{N} \langle \sigma_i ; \sigma_j\rangle$. 
\end{abstract}

\section{Introduction} 

Consider $N$ spins $ \sigma_i$, $i\in \{1,\dots,N\}$, with values in $\{-1,1\}$ whose interactions are described by the Sherrington-Kirkpatrick (SK) \cite{SK} Hamiltonian $H_N:\{-1,1\}^N\to\mathbb{R}$ 
		\be \label{eq:defHN} H_N(\sigma) = \sum_{1\leq i < j\leq N} g_{ij} \sigma_{i}\sigma_j+h \sum_{i=1}^{N} \sigma_i.\ee
Here, the $(g_{ij})_{1\leq i<j\leq N}$ are \iid centered Gaussian random variables with variance $t N^{-1}$ for $i\neq j$ and we set $g_{ii} = 0$ for all $i=1,\ldots,N$. The parameter $t\geq 0$ tunes the interaction strength and $h\in\bR$ tunes the external field strength. We assume the $\{g_{ij}\}$ to be realized in some probability space $(\Omega, \cF, \bP)$ and we denote the corresponding expectation by $\mathbb{E}(\cdot)$. We denote the $L^p(\Omega, \cF, \bP)$ norms by $ \| \cdot\|_{L^p(\Omega)} = (\bE\, |\cdot|^p)^{1/p}$. Expectations over other, independent random variables are typically denoted by $ E(\cdot)$.

In this paper, we analyze the distributional behavior of the spin covariances at high temperature in the limit $N\to\infty$. The magnetization $m_i$ of spin $\sigma_i$ and the spin covariance $m_{ij}$ between spins $\sigma_i$ and $\sigma_j$ are defined by 
		\[ m_i   = \langle \sigma_i \rangle \hspace{0.5cm} \text{ and } \hspace{0.5cm} m_{ij}=\langle \sigma_i ; \sigma_j\rangle= \langle \sigma_i \sigma_j\rangle -\langle \sigma_i\rangle \langle \sigma_j\rangle, \]
where $\langle \cdot \rangle$ denotes the Gibbs expectation induced by $H_N$. It is defined so that
		\[   \langle f \rangle = \frac{1}{Z_N} \sum_{\sigma \in \{-1,1\}^N}  f(\sigma) \,e^{H_N(\sigma)}, \hspace{0.5cm} Z_N =    \sum_{\sigma \in \{-1,1\}^N}  e^{H_N(\sigma)} \]
for every observable $ f: \{-1,1\}^N \to \mathbb{R}$. For $t\geq 0$ sufficiently small, it is well known that several quantities of interest related to the SK model have Gaussian fluctuations in the limit $N\to\infty$. This includes for instance the free energy fluctuations \cite{ALR, CN, GT} and the overlap fluctuations \cite[Section 1.10]{Tal1}. A notable exception consists of the two point functions $  m_{ij}$ in case of a non-vanishing external field $h\neq 0$: as shown in \cite{H}, in this case the fluctuations $\sqrt{N} m_{ij}$ converge in distribution to an explicit non-Gaussian limit. 

The goal of this paper is to provide a new proof of the distributional convergence of $\sqrt{N} m_{ij}$. To state our main result, let us denote by $q= q_{t,h}\in [0,1]$ the solution to 
        \[ q = E \tanh^2(h+ \sqrt{t q}Z),  \]
where $ Z\sim \cN(0,1)$ denotes a standard Gaussian random variable and where $E(.)$ denotes the expectation \wrt $Z$ (if $h=0$, we set $q_{t,h}=0$). Moreover, we denote by $\mu = \mu_{t,h}\in [0,1]$ the quantity
        \[ \mu =E \,\text{sech}^4(h+\sqrt{tq}Z).  \]
\begin{theorem}\label{thm:main}  
Assume that $t\geq 0$ is sufficiently small and denote by $ Z_1, Z_2, Z_3$ i.i.d. standard Gaussian random variables. Then, in the limit $N\to \infty$, we have that
        \be \label{eq:Flu} 
        \sqrt{N}m_{ij} \limlaw   \sqrt{\frac{t}{ 1-t\mu}} \, Z_1\,
        \emph{sech}^2\left(h+\sqrt{tq}Z_2\right) \emph{sech}^2\left(h+\sqrt{tq}Z_3\right).  
        \ee    
\end{theorem}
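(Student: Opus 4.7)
The plan is to follow the path-expansion strategy highlighted in the abstract, which is inspired by the Wigner-type matrix analyses of \cite{ABSY, BSXY, BXY}. Fix distinct indices $i \neq j$. The first step is to derive, via the cavity method at high temperature, an approximate linear-response identity of the form
\[
m_{ij} \approx D_i \sum_{k \neq i, j} g_{ik}\, m_{kj}^{(i)},
\]
where $D_i = \mathrm{sech}^2\bigl(h + \sum_{k \neq i} g_{ik} m_k^{(i)}\bigr)$ and $\langle \cdot \rangle^{(i)}$ denotes the Gibbs measure with site $i$ removed. Heuristically, this identity is obtained by differentiating the TAP fixed-point relation $m_i \approx \tanh(h + \sum_k g_{ik} m_k^{(i)})$ with respect to an external field at $j$. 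Iterating the identity and carefully tracking cavity truncations produces, up to controllable error, the self-avoiding path expansion
\[
m_{ij} \approx \sum_{\ell \geq 1} \sum_{\substack{i = v_0, v_1, \ldots, v_\ell = j \\ v_0,\ldots,v_\ell \text{ distinct}}} D_{v_0}\, g_{v_0 v_1} D_{v_1}\, g_{v_1 v_2} \cdots g_{v_{\ell-1} v_\ell}\, D_{v_\ell}.
\]

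Next I would perform the CLT analysis one path length $\ell$ at a time. Since each $g_{uv}$ has variance $t/N$, a path of length $\ell$ contributes an order $N^{-\ell/2}$ factor, while the number of self-avoiding $\ell$-paths is $\sim N^{\ell-1}$; hence each $\ell$-summand of $\sqrt{N}\, m_{ij}$ is of order one. Pulling out the endpoint factors $D_i, D_j$ and conditioning on the cavity environments at $i$ and $j$ (which asymptotically depend only on couplings essentially disjoint from the interior sum), the interior sum becomes a mean-zero multilinear polynomial in Gaussian couplings. A Wick-contraction computation, combined with a law of large numbers for the interior factors $D_{v_r}^2$ (whose asymptotic expectation is exactly $\mu = E\, \mathrm{sech}^4(h + \sqrt{tq}\, Z)$), shows that this polynomial converges in distribution to a centered Gaussian of variance $t^\ell \mu^{\ell-1}$. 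Cross-terms between different lengths vanish by Gaussian parity/integration by parts, so summing the geometric series $\sum_{\ell \geq 1} t^\ell \mu^{\ell-1} = t/(1 - t\mu)$ produces precisely the prefactor of $Z_1$ in \eqref{eq:Flu}.

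Finally, one must identify the joint limit of the endpoint factors. Standard cavity/TAP analysis at high temperature yields that each cavity local field $h + \sum_{k \neq i} g_{ik} m_k^{(i)}$ converges in distribution to $h + \sqrt{tq}\, Z$ with $Z \sim \cN(0,1)$, and since the fields at $i$ and $j$ are governed by essentially disjoint cavity subsystems, they become asymptotically independent of each other as well as of the interior Gaussian. Combining the three limits yields \eqref{eq:Flu} with $Z_1, Z_2, Z_3$ independent.

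The main obstacle, in my view, is the rigorous control of the path expansion. Two separate error sources require careful treatment: first, the cavity/TAP approximation must be justified to a precision better than $N^{-1/2}$ in $L^2(\Omega)$, and the accumulated error must remain small after iterating the identity many times; second, the non-self-avoiding contributions — in particular graphs with a small number of repeated vertices, analogous to the subtle backtracking configurations in \cite{ABSY, BSXY, BXY} — must genuinely vanish in the limit. Truncating the expansion at length $L \asymp \log N$, bounding each error term in $L^2(\Omega)$ using the high-temperature concentration estimates of \cite[Ch.~1]{Tal1}, and then performing the combinatorial count on remaining degenerate graphs should close the argument. The asymptotic independence of the three limiting variables likewise deserves care, and is best established via a perturbative replacement of the shared couplings $g_{ik}, g_{jk}$ appearing simultaneously in the endpoint factors and in the interior sum.
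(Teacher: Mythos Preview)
Your proposal follows essentially the same strategy as the paper: derive a self-avoiding path expansion for $m_{ij}$ via iterated cavity identities, pull out the two endpoint vertex weights, show the interior path sums are asymptotically Gaussian with variances $t(t\mu)^n$ by a second-moment/Wick-type computation using $\mu=E\,\mathrm{sech}^4(h+\sqrt{tq}Z)$, and finally identify the endpoint weights with the two independent $\mathrm{sech}^2$ factors. Two technical choices differ and are worth flagging. First, the paper truncates at a \emph{fixed} level $M$, proves an $L^2$ error of order $CN^{-1/2}+Ct^{M/2}$ (Prop.~\ref{prop:distT}), sends $N\to\infty$ at fixed $M$, and only then $M\to\infty$; this avoids the uniformity-in-$M$ issues that your $L\asymp\log N$ truncation would create in the CLT step, where constants from the path combinatorics and Stein/moment bounds depend on $M$. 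Second, for the distributional limit of the truncated vector the paper uses a multivariate Stein argument (with a randomized covariance $m_{ii}m_{jj}^{(i)}\Sigma^{1/2}$) rather than a direct moment computation; the paper explicitly notes that the moment route you sketch is carried out in \cite{CvD}, so your variant is viable. One refinement you should adopt from the paper is that the interior vertex weights carry \emph{growing} cavity sets $m_{k_rk_r}^{(i,k_1,\ldots,k_{r-1},j)}$ along the path, not a single $D_{v_r}$; this is what makes the edge disorder $g_{k_{r-1}k_r}$ genuinely independent of the preceding weights and enables the clean $L^2$ recursion in the proof of Prop.~\ref{prop:distT}.
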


\noindent \textbf{Remarks:}
\begin{enumerate}[1)]
\item The convergence in \eqref{eq:Flu} was proved first in \cite{H}. To be more precise, \cite[Theorem 1.1]{H} shows the convergence of the moments of $ \sqrt{N}m_{ij}$ towards those of the random variable on the \rhs in \eqref{eq:Flu}, which implies \eqref{eq:Flu} by standard arguments. 
\item It is clear that the convergence \eqref{eq:Flu} can only hold true if the parameters $(t,h)$ satisfy the de Almeida-Thouless condition \cite{AT} $ t\mu_{t,h}<1$. Recall that this parameter range corresponds to the expected replica symmetric region of the model. Whether \eqref{eq:Flu} holds true for all $(t,h)$ that satisfy $ t\mu_{t,h}<1$ remains a challenging open question.
\end{enumerate}
\vspace{0.2cm}
As pointed out above, our main result Theorem \ref{thm:main} is not new, but was already proved in \cite{H}. The proof in \cite{H} computes the moments of $\sqrt{N}m_{ij}$ explicitly by relating them to suitable overlap moments and applying interpolation arguments which are based on the cavity method (for more details on this, see \eg \cite[Sections 1.6 to 1.11]{Tal1}). As such, the proof does not, unfortunately, shed much light on how the specific structure of the limiting distribution on the \rhs in \eqref{eq:Flu} emerges. In particular, as pointed out by Talagrand \cite[Research Problem 1.11.2]{Tal1}, one wonders whether the specific form of the limiting distribution arises from some underlying structure. In this paper, we aim to clarify this point by giving a new proof of Theorem \ref{thm:main} which is inspired by recent results in \cite{ABSY, BSXY, BXY} and which is based on the following heuristics: 

Basic mean field theory suggests that the magnetizations $m_i$ satisfy for $t\geq 0$ sufficiently small the cavity equations 
        \be\label{eq:caveq} m_i \approx \tanh\Big( h + \sum_{k\neq i} g_{ik} m_k^{(i)}\Big),  \ee
where $ m_k^{(i)} $ denotes the $k$-th magnetization after spin $i$ has been removed from the system (for more details on our notation, see Section \ref{sec:notation} below). The validity of \eqref{eq:caveq} for small $t\geq 0$ can be proved with a variety of tools and is well understood \cite{Cha,Tal1, ABSY}. By differentiating \eqref{eq:caveq} \wrt the external field, one can derive analogous equations for higher order correlations functions \cite{ABSY}, and in particular that 
        \[\begin{split} 
        m_{ij} &\approx m_{ii}\sum_{k\neq i} g_{ik} m_{kj}^{(i)} =  m_{ii} \,  g_{ij}   m_{jj}^{(i)}  + m_{ii}\sum_{k\neq i,j} g_{ik} m_{kj}^{(i)}. \end{split}\]
Note that the leading order (in $t$) term $ m_{ii}m_{jj}^{(i)}  g_{ij} = (1-m_i^2) ( 1-  (m_{j}^{(i)})^2)g_{ij}  $ suggests that $m_{ij}$ is typically of size $ O(N^{-1/2}) $ and that the scaling limit $ \lim_{N\to\infty}\sqrt{N} m_{ij}$ can not expected to be Gaussian unless $h=0$, in which case $ m_i=m_{j}^{(i)}=0$ by symmetry. 

In order to go a step further and extract the correct candidate for the limiting distribution, we iterate the previous identity and obtain the graphical representation
        \[\begin{split}
         m_{ij} &\approx  m_{ii} \,m_{jj}^{(i)}\Big( g_{ij} + \sum_{k\neq i,j} g_{ik} m_{kk}^{(i,j)} g_{kj} + \!\!\sum_{k,l\neq i,j; k\neq l}\!\!\! g_{ik} m_{kl}^{(i,j)} g_{lj}\Big) \approx m_{ii} \,m_{jj}^{(i)}\sum_{n\geq 0}\sum_{\gamma\in \Gamma_{n+1}^{ij} } w(\gamma). 
         \end{split} \]
Here, $ \Gamma_{n+1}^{ij}$ denotes the set of self-avoiding paths $\gamma$ among $N$ vertices from vertex $i$ to $j$ of length $|\gamma|=n+1$ and the weight $w(\gamma)$ is defined by 
        \be\label{def:wgam}w(\gamma) =  g_{ik_1} m_{k_1k_1}^{(i,j)} g_{k_1k_2}  m_{k_2k_2}^{(i,k_1,j)}g_{k_2k_3}\ldots m_{k_{n}k_{n}}^{(i,k_1,k_2,\ldots,k_{n-1},j)}g_{k_{n} j}, \ee
if $ \gamma =\big( \{i,k_1\}, \{k_1,k_2\},\ldots, \{k_{n},j\} \big)\in \Gamma_{n+1}^{ij} $ and  $n\geq 1$ (for $n=0$ we set $ w(\gamma)= g_{ij}$). 

Now, in order to see how the distribution on the \rhs in \eqref{eq:Flu} emerges from the path representation of $m_{ij}$, let us make two observations. First, based on the cavity equations \eqref{eq:caveq} and the identity $m_{ii}=1-m_i^2$, we expect that
        \[ m_{ii} \approx \text{sech}^2 \Big( h + \sum_{k\neq i} g_{ik} m_k^{(i)}\Big) \stackrel{\cD}{=} \text{sech}^2 \Big(h + \sqrt{t q_N^{(i)}} Z \Big) \approx \text{sech}^2 \big(h + \sqrt{t q} Z \big)  \]
for a Gaussian $ Z = (q_N^{(i)})^{-1/2} \sum_{k\neq i} g_{ik} m_k^{(i)} \sim \cN(0,1)$, which is independent of the disorder $ (g_{kl})_{k,l\neq i}$, and where we approximated in the third step 
        $$q_N^{(i)} =\frac1N\sum_{k\neq i}(m_k^{(i)})^2 \approx q, $$ 
which can also be justified based on \eqref{eq:caveq}. Proceeding similarly for $ m_{jj}^{(i)}$, this suggests
        \be \label{eq:appr1} \sqrt{N} m_{ij} \stackrel{\cD}{\approx} \Big( \sum_{n\geq 0}\sum_{\gamma\in \Gamma_{n+1}^{ij} } \sqrt{N} w(\gamma) \Big)\,\text{sech}^2 \big(h + \sqrt{t q} Z_2 \big)\text{sech}^2 \big(h + \sqrt{t q} Z_3 \big)  \ee
for two i.i.d. Gaussian random variables $ Z_2, Z_3$ that are independent of the first factor on the \rhs in \eqref{eq:appr1} (indeed, as explained below, the correlations between the three factors are negligible as $N\to \infty$). That is, the second and third factors on the \rhs in \eqref{eq:Flu} emerge as a consequence of the vertex weights $ m_{ii}$ and $m_{jj}^{(i)}$, respectively, and they cause the non-Gaussian behavior of the limiting distribution of $\sqrt{N} m_{ij}$ if $h\neq 0$. 

On the other hand, motivated by the leading order term $  \sum_{\gamma\in \Gamma_{1}^{ij} } \sqrt{N} w(\gamma) = \sqrt{N} g_{ij} \stackrel{\cD}{=}\cN(0,t) $ and by the results of \cite{ALR, BSXY}, a few basic moment computations combined with similar arguments as above suggest that the random vector $ \big(T_{n+1}^{ij}\big)_{n\geq 0}$, defined through
        \be \label{def:Tn} T_{n+1}^{ij} = \sum_{\gamma\in \Gamma_{n+1}^{ij} } \sqrt{N} w(\gamma), \ee
is close in distribution to a centered Gaussian vector $ \textbf{Y}= (Y_k)_{k\geq 0}$ with covariance 
        $$ E\, Y_k Y_l = t (t\mu)^k\delta_{kl} = \begin{cases}  t (t\mu)^k &: k=l, \\0 &: k\neq l.  \end{cases}$$ 
By the additivity of the variance for a sum of independent Gaussians, we thus expect  
        \[  \sum_{n\geq 0}\sum_{\gamma\in \Gamma_{n+1}^{ij} } \sqrt{N} w(\gamma) \stackrel{\cD}{\approx} \sum_{n\geq 0} Y_n \stackrel{\cD}{=} \sqrt{ t \sum_{n\geq 0} (t\mu)^n}  Z_1  \]
for some standard normal variable $Z_1$ (independent of $Z_2, Z_3$). With \eqref{eq:appr1}, we arrive at
        \[\sqrt{N} m_{ij} \stackrel{\cD}{\approx} \sqrt{\frac{t}{ 1-t\mu}} \, Z_1\,
       \text{sech}^2 \big(h + \sqrt{t q} Z_2 \big)\text{sech}^2 \big(h + \sqrt{t q} Z_3 \big).  \] 

It turns out that the simple heuristics sketched above can be made rigorous without too much effort, thus providing an alternative and, hopefully, more transparent proof of Theorem \ref{thm:main} compared to \cite{H}. To carry this out, we proceed in two main steps. After introducing basic notation in the next Section \ref{sec:notation}, we first extend the analysis of \cite{ABSY} to derive the following truncated version of the path representation of $ m_{ij}$ in Section \ref{sec:Tbnd}.

    \begin{prop}\label{prop:distT} 
    Let $t\geq 0$ be small enough. Then, there exists a constant  $C>0$, which is independent of $M, N$ and $t$, such that
    \begin{equation*} 
    \Big\| \sqrt{N}   m_{ij} -m_{ii} m_{jj}^{(i)}\sum_{n=0}^{M}T_{n+1}^{ij} \Big\|_2  \leq C  N^{-1/2} + Ct^{M/2} .
    \end{equation*}
    \end{prop}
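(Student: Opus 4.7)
The plan is to make the heuristic iteration described in the introduction rigorous, unfolding the cavity relation for $m_{ij}$ into a self-avoiding path expansion. The starting point is a one-step cavity identity
\[
 m_{ij} = m_{ii}\sum_{k\neq i} g_{ik}\, m_{kj}^{(i)} + \rho_{ij}, \qquad \|\rho_{ij}\|_{L^2(\Omega)} = O(N^{-1}),
\]
which one obtains along the lines of \cite{ABSY} by differentiating the cavity approximation \eqref{eq:caveq} with respect to an external field at site $j$ (using $\partial_{h_j}m_i = m_{ij}$) and combining with standard $L^p$-concentration estimates for the magnetizations at high temperature. The prefactor $m_{ii} = 1 - m_i^2$ arises naturally as the derivative of the tanh in \eqref{eq:caveq}.

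The next step is the iteration. To each factor $m_{kj}^{(i)}$ above I apply the analogous cavity identity in the system with spin $i$ removed, and continue recursively. At every stage I split off the contribution where the new vertex coincides with the target $j$ — this produces exactly a self-avoiding path from $i$ to $j$ of length one greater, weighted by the product $w(\gamma)$ defined in \eqref{def:wgam}. After $M+1$ iterations I arrive at a decomposition
\[
 \sqrt{N}\, m_{ij} = m_{ii}\sum_{n=0}^{M}\sum_{\gamma\in \Gamma_{n+1}^{ij}} \sqrt{N}\, w(\gamma)\, m_{jj}^{(i,k_1,\ldots,k_n)} \;+\; \cR_{M+1} \;+\; \cE,
\]
where $\cR_{M+1}$ is the truncation term containing paths of length at least $M+2$, and $\cE$ collects the accumulated $L^2$ residuals from the $M+1$ successive cavity steps.

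Two further reductions bring this into the form stated in the proposition. First, the path-dependent tail factor $m_{jj}^{(i,k_1,\ldots,k_n)}$ must be replaced by the uniform $m_{jj}^{(i)}$; a cavity-stability estimate of the form $\|m_{jj}^{(i,k_1,\ldots,k_n)} - m_{jj}^{(i)}\|_{L^p(\Omega)} \lesssim (n/N)^{1/2}$, combined with the per-length variance bound $\|T_{n+1}^{ij}\|_{L^2(\Omega)}\lesssim t^{(n+1)/2}$ (see below), keeps the net replacement error in $L^2$ at $O(N^{-1/2})$. Second, $\cR_{M+1}$ is controlled by the geometric tail $\sum_{n\geq M+1}t^{(n+1)/2}=O(t^{(M+2)/2})$, which bounds the truncation by $O(t^{M/2})$.

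The genuinely delicate point, and the main obstacle, is the per-length second-moment bound $\|T_{n+1}^{ij}\|_{L^2(\Omega)}^2\lesssim t^{n+1}$ uniformly in $n\leq M$. A crude triangle inequality over the $\sim N^n$ self-avoiding paths only produces a bound of order $N^{n/2} t^{(n+1)/2}$, which diverges with $N$. One must instead compute the second moment by expanding the square and pairing Gaussians: after Gaussian integration by parts the cross-terms with $\gamma\neq\gamma'$ collapse (any unpaired edge integrates to zero up to small corrections coming from the dependence of the magnetization factors on that edge, which are controlled by concentration), while the diagonal $\gamma=\gamma'$ contribution reproduces exactly the heuristic variance $N \cdot N^n (t/N)^{n+1} = t^{n+1}$. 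Aggregating the cavity residuals in $\cE$ over the $M+1$ iterations — each contributing $L^2$ norm $O(N^{-1/2})$ when multiplied by $\sqrt{N}$ — then combines with the truncation bound to yield the claimed $O(N^{-1/2}+t^{M/2})$ estimate.
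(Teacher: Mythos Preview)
Your overall strategy matches the paper's: iterate a one-step cavity identity to unfold $m_{ij}$ into self-avoiding paths, bound the accumulated It\^o-type residuals by $O(N^{-1})$ each in $L^2$, and control the truncation at level $M+1$ by $O(N^{-1/2}t^{M/2})$. The key mechanism --- exploiting independence between the peeled-off Gaussian column $(g_{ik})_{k\neq i}$ and cavity-system quantities to turn $\big\|\sum_k g_{ik}F_k^{(i)}\big\|_2^2$ into $(t/N)\sum_k\|F_k^{(i)}\|_2^2$ --- is exactly what the paper uses (it does this via conditional independence directly rather than the Gaussian integration by parts you sketch, which is cleaner but equivalent in effect).

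There is, however, a structural difference that leaves a small gap. You iterate purely from the $i$-side: $m_{ij}\to m_{k_1j}^{(i)}\to m_{k_2j}^{(i,k_1)}\to\cdots$, expanding at each successive inner vertex. This produces inner vertex weights $m_{k_lk_l}^{(i,k_1,\ldots,k_{l-1})}$ \emph{without} $j$ in the cavity, together with a path-dependent endpoint $m_{jj}^{(i,k_1,\ldots,k_n)}$. Neither matches $w(\gamma)$ as defined in \eqref{def:wgam}, whose inner weights carry $j$ in every cavity set. You correctly flag the endpoint replacement $m_{jj}^{(i,k_1,\ldots,k_n)}\to m_{jj}^{(i)}$, but the analogous correction for each inner weight is missing, and the number of such corrections grows with $n$. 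This is fixable by the same cavity-stability bounds, but the paper sidesteps the issue entirely by a different iteration order: after extracting $m_{ii}$ at vertex $i$, the \emph{second} step extracts $m_{jj}^{(i)}$ at vertex $j$ (expanding $m_{k_1j}^{(i)}=m_{jk_1}^{(i)}$ in the column $(g_{jl})_l$), and only then iterates on inner vertices. This yields the exact weights $m_{ii}m_{jj}^{(i)}w(\gamma)$ directly, with no replacement step at all. The truncation remainder $A_{M+1}$ is then a single term containing an honest two-point function $m_{k_{M-1}k_M}^{(B)}$ at its core, bounded directly via the a~priori estimate $\|m_{kl}^{(B)}\|_2\leq CN^{-1/2}$ of Proposition~\ref{prop:apriori}, rather than summed as a geometric tail.
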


In the second step, we need to compute the limiting distribution of the random variable $ m_{ii} m_{jj}^{(i)} \sum_{n=0}^{M}T_{n+1}^{ij}$. Inspired by \cite{ALR}, this could for instance be based on a direct moment analysis. This is carried out in \cite{CvD} on which part of this manuscript is based. In this paper instead, we rather find it convenient to follow the above heuristics more closely and we determine the joint limiting distribution of the random vector 
        \be \label{def:XM} \tbf{X}^{(M)}=(X_n)_{n=0}^M = m_{ii} m_{jj}^{(i)}\big(T_{n+1}^{ij}\big)_{n=0}^M\in \bR^{M+1},\ee
for fixed $M\in\bN$. Based on Stein's method, the following is proved in Section \ref{sec:stein}. 

\begin{prop}\label{prop:Tlim} 
    Let $t\geq 0$ be small enough. Then, for every $M\in\bN$, we have in the limit $N\to\infty$ that 
        \[  \tbf{X}^{(M)} \limlaw  \tbf Y^{(M)} \emph{sech}^2 \big(h + \sqrt{t q} Z_2 \big)\emph{sech}^2 \big(h + \sqrt{t q} Z_3 \big) 
         \]
    for a centered Gaussian vector $ \tbf Y^{(M)} = (Y_n)_{n=0}^M$ with covariance $ E\, Y_k Y_l = \delta_{kl }t (t\mu)^k$ and for two i.i.d. standard normal variables $Z_2, Z_3$ which are independent of $  \tbf Y^{(M)}$. 
    \end{prop}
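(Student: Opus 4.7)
The plan is to prove Proposition \ref{prop:Tlim} by combining a decoupling step with a multivariate Stein-type convergence argument. Concretely, I would introduce a ``decoupled'' vector $\widetilde{\mathbf{X}}^{(M)}$ in which every cavity vertex-weight $m_{kk}^{(\ldots)}$ appearing in \eqref{def:wgam}, as well as the outer factors $m_{ii}$ and $m_{jj}^{(i)}$, is replaced by an independent copy of $\text{sech}^2(h + \sqrt{tq}Z)$ built from a fresh family of i.i.d.\ standard Gaussians $(Z_k)_{k=1}^N$ and independent $Z_2, Z_3$. The task then splits into two parts: (a) showing that $\widetilde{\mathbf{X}}^{(M)}$ converges in distribution to the right-hand side of the claim, and (b) showing that $\|\mathbf{X}^{(M)} - \widetilde{\mathbf{X}}^{(M)}\|_2 \to 0$ as $N\to\infty$ for each fixed $M$.

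For step (a), I would condition on the idealized weights $(Z_k, Z_2, Z_3)$. Under this conditioning, each $\widetilde{T}_{n+1}^{ij}$ becomes a sum of $|\Gamma_{n+1}^{ij}| = O(N^n)$ independent centered Gaussian monomials of order $O(N^{-(n+1)/2})$. A direct second-moment computation combined with a law of large numbers gives that the conditional variance converges almost surely to $t(t\mu)^n$, with the factor $\mu^n$ emerging from $E\,\text{sech}^4(h+\sqrt{tq}Z)=\mu$. Orthogonality of $\widetilde{T}_{n+1}^{ij}$ and $\widetilde{T}_{m+1}^{ij}$ for $n\neq m$ is immediate since two self-avoiding paths of different lengths cannot share the same edge set while the $g$-variables are independent mean-zero Gaussians. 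A Lindeberg-type check then yields the conditional joint CLT, and integrating over $(Z_k, Z_2, Z_3)$ produces the product form claimed in the proposition. Alternatively, and in line with the section title, one can obtain the joint Gaussianity directly by applying multivariate Stein's method to smooth test functions of the decoupled vector: Gaussian integration by parts in the disorder reduces the Stein equation to a path-matching combinatorial identity, giving both the diagonal covariance $t(t\mu)^n\delta_{nm}$ and the asymptotic independence of the components.

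Step (b) is the main obstacle and leans on cavity ideas analogous to those behind Proposition \ref{prop:distT}. The key identity is $m_{kk}^{(\ldots)} \approx \text{sech}^2\bigl(h + \sum_\ell g_{k\ell} m_\ell^{(\ldots)}\bigr)$ together with the fact that, for a fixed path $\gamma$, the internal field $\sum_\ell g_{k\ell} m_\ell^{(\ldots)}$ is close in $L^2$ to $\sqrt{tq}\,Z_k$ for some Gaussian $Z_k$ independent of the disorder on $\gamma$'s edges (the finitely many terms with $\ell\in\gamma$ contribute only $O(N^{-1/2})$). The delicate point is that the cavity Gaussians $Z_k$ attached to distinct vertices of the same path correlate through the shared ambient disorder, and correlate similarly with the outer factors $m_{ii}, m_{jj}^{(i)}$. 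Carefully exploiting high-temperature decorrelation, the per-path $L^2$ error should be bounded by $o(1)\cdot t^{n+1}$; summing over self-avoiding paths of length $n+1$ then produces a geometric series in $n$ of total size $o(1)$ uniformly in $M$, which closes the approximation and yields the proposition.
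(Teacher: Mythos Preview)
Your step (b) cannot work as stated: the claim $\|\mathbf{X}^{(M)} - \widetilde{\mathbf{X}}^{(M)}\|_2 \to 0$ is simply false when the $Z_k,Z_2,Z_3$ are \emph{fresh} Gaussians independent of the disorder. Already for $n=0$, since $g_{ij}$ is essentially independent of the bracket,
\[
\big\|X_0-\widetilde X_0\big\|_2^2 \;\approx\; t\,\bE\Big[\big(m_{ii}m_{jj}^{(i)}-\text{sech}^2(h+\sqrt{tq}\,Z_2)\,\text{sech}^2(h+\sqrt{tq}\,Z_3)\big)^2\Big],
\]
and because $m_{ii}m_{jj}^{(i)}$ and its idealized replacement are \emph{independent} random variables with (asymptotically) the same law, the right-hand side tends to $2t\,\text{Var}\big(\text{sech}^2(h+\sqrt{tq}\,Z)\,\text{sech}^2(h+\sqrt{tq}\,Z')\big)$, which is strictly positive whenever $h\neq 0$. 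The same obstruction hits the interior vertex weights: the diagonal part of $\|X_1-\widetilde X_1\|_2^2$ is $\approx t^2\,\bE\big[(m_{kk}^{(i,j)}-\text{sech}^2(h+\sqrt{tq}\,Z_k))^2\big]$, which again does not vanish. Replacing a random variable by an independent copy of the same distribution preserves the law but never yields $L^2$ closeness unless the variable is deterministic. Your paragraph on (b) in fact slides from ``fresh i.i.d.\ $Z_k$'' to the cavity Gaussians built from the ambient disorder; with the cavity choice the vertex-weight replacement is indeed $L^2$-small, but then the $Z_k$ are neither mutually independent nor independent of the edge Gaussians along the path, and the conditional-CLT argument of step (a) collapses.

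The paper avoids this dichotomy by applying Stein's method directly to $\mathbf{X}^{(M)}$ rather than to a decoupled proxy, and---this is the key device---with a \emph{disorder-dependent} target covariance $\cC=(m_{ii}m_{jj}^{(i)})^2\Sigma$ in the Stein equation. The factors $m_{ii},m_{jj}^{(i)}$ and all interior $m_{kk}^{(\ldots)}$ are kept as they are throughout the Gaussian integration by parts in the edge variables; Lemma~\ref{lm:m-ind} is invoked only to evaluate \emph{expectations} of products of vertex weights, where closeness of moments (not $L^2$ closeness of the random variables themselves) suffices. Only after the Stein remainder is shown to vanish does one use the distributional approximation $m_{ii}m_{jj}^{(i)}\approx \text{sech}^2(h+\sqrt{tq}\,Z_2)\,\text{sech}^2(h+\sqrt{tq}\,Z_3)$, and there it is applied inside a bounded Lipschitz test function, so again no $L^2$ control is required.
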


Theorem \ref{thm:main} is now a direct consequence of Prop.\ \ref{prop:distT} and Prop.\ \ref{prop:Tlim}: 
\begin{proof}[Proof of Theorem \ref{thm:main}]
  We assume $t\geq 0$ to be sufficiently small, in particular $t<1$. For $M\in\bN$, let $\tbf X^{(M)}$ and $\tbf Y^{(M)}$ be defined as above and let us denote by $\tbf U^{(M)}$ the vector
        \[ \tbf U^{(M)} =\tbf Y^{(M)} \text{sech}^2 \big(h + \sqrt{t q} Z_2 \big)\text{sech}^2 \big(h + \sqrt{t q} Z_3 \big).\]
We use the standard fact that a sequence $ (\chi_k)_{k\geq 0}$ of $ \bR^d$-valued random variables converges in distribution to $\chi$ if $E f(\chi_k) \to E f(\chi)$ as $k\to \infty$ for all bounded Lipschitz continuous $f:\bR^d\to \bR$ (see \eg \cite[Theorem 11.3.3]{Dud}). Applying this criterion, note that Prop.\ \ref{prop:Tlim} and standard properties of independent Gaussian random variables imply that
        \[ \sum_{n=0}^M X_n \limlaw \sum_{n=0}^M U_n \hspace{0.5cm} \text{as} \hspace{0.5cm} N\to \infty\]
as well as
        \[\begin{split}
              \sum_{n=0}^M U_n &\stackrel{\cD }{=}  \sqrt{t \sum_{n=0}^M(t\mu)^n} \, Z_1\,
       \text{sech}^2 \big(h + \sqrt{t q} Z_2 \big)\text{sech}^2 \big(h + \sqrt{t q} Z_3 \big) \\
&\limlaw \sqrt{\frac{t}{ 1-t\mu}} \, Z_1\,
       \text{sech}^2 \big(h + \sqrt{t q} Z_2 \big)\text{sech}^2 \big(h + \sqrt{t q} Z_3 \big) \equiv V \hspace{0.5cm} \text{as} \hspace{0.5cm} M\to \infty. 
        \end{split}\]
Now, let $f: \bR\to\bR$ be bounded and Lipschitz continuous with Lipschitz constant $L\geq 0$. By Prop.\ \ref{prop:distT}, Cauchy-Schwarz and the previous observations, we find that 
        \[\begin{split}
            \big| \bE f( \sqrt N m_{ij}) - E f(V) \big| \leq &\  L \Big\| \sqrt N m_{ij}- \sum_{n=0}^M X_n\Big\|_2  +   o_N(1) + o_M(1) \\ 
            \leq&\,    L (C N^{-1/2} + Ct^{M/2})  + o_N(1) + o_M(1).
        \end{split} \]
Sending $N\to\infty$ and then $M\to \infty$, we find that $\lim_{N\to\infty}\bE f( \sqrt N m_{ij}) = E f(V)$.
\end{proof}

\section{Notation and Auxiliary Results}\label{sec:notation}

In this section, we introduce some notation and  we recall a few results from \cite{ABSY}. 

First of all, connected correlation functions are defined by
        \[
        m_{i_1 \dots i_n}= \big(\partial_{h_{i_1}} \ldots \partial_{h_{i_n}} \log Z_N\big) |_{h_1, \dots, h_N=h}.
        \]
Below, we also consider expectations of observables conditionally on a given number of spins. To this end, suppose $A = \{j_1, j_2, \dots , j_k\} \subset \{1,\dots, N\}$ is disjoint from $B \subset \{1, \dots , N \}$ with $|B| = l$ and let $\tau = (\tau_{j_1}, \dots , \tau_{j_k}) \in \{-1, 1\}^k$ be a $k$-particle configuration. Then, we define $H_N^{[A,B]} = H_{N,(\tau_{j_1}, \dots , \tau_{j_k})}^{[A,B]}:\{-1,1\}^{N-k-l} \to \bR$ by
        \[
        H_N^{[A,B]}(\sigma)=\sum_{\substack{ 1\leq i < j\leq N \\ i,j \notin A \cup B}} g_{ij} \sigma_{i}\sigma_j+h \sum_{\substack{ 1\leq i \leq N \\ i \notin A \cup B}}^{N} \Big(h+\sum_{j\in A}g_{ij}\tau_j\Big)\sigma_i.
        \]
$H_N^{[A,B]}(\sigma)$ corresponds to the energy of the system, conditionally on $\sigma_j$ for $j\in A$ (\st $\sigma_j=\tau_j$) and with all spins labeled by $j \in B$ being removed from the system. For disjoint subsets $A,B \subset \{1, \dots, N\}$, $ \langle . \rangle^{[A,B]}$ denotes the Gibbs measure induced by $H_N^{[A,B]}$. Notice that $\langle f \rangle^{[A,B]}$ is equal to the conditional expectation of $f$, given the spins $\sigma_j$ for $j \in A$

We abbreviate $\langle . \rangle^{[A]} \equiv \langle . \rangle^{[A, \emptyset]}$, $\langle . \rangle^{(B)} \equiv \langle . \rangle^{[\emptyset, B]}$ as well as $\langle . \rangle \equiv \langle . \rangle^{[\emptyset, \emptyset]}$. By slight abuse of notation, if $A=\{i\}$, we write for simplicity $\langle . \rangle^{[i]}= \langle . \rangle^{[\{i\}]}$ and $\langle . \rangle^{(i)}= \langle . \rangle^{(\{i\})}$.

Finally, for disjoint $A,B \subset \{1, \dots, N\}$, an index $i\in A$ and an observable $f$, we  set 
        \[\begin{split}
        \delta_i \langle f \rangle^{[A,B]}&=\frac{1}{2}\big(\langle f \rangle^{[A,B]}(\sigma_i=1)-\langle f \rangle^{[A,B]}(\sigma_i=-1)\big), \\
        \epsilon_i \langle f \rangle^{[A,B]}&=\frac{1}{2}\big(\langle f \rangle^{[A,B]}(\sigma_i=1)+\langle f \rangle^{[A,B]}(\sigma_i=-1)\big).
        \end{split}\]

An important ingredient in our analysis is that correlation functions are small for $t\geq 0$ small enough. In \cite{ABSY}, this is proved based on the key identities (see \cite[Section 3]{ABSY})
        \begin{equation}\label{eq:ito2pt}
        \begin{split}
        m_{ij}^{[A,B]}&=\Big(1-(m_i^{[A,B]})^2\Big)\delta_i m_j^{[A \cup \{i\},B]}, \\ 
       \text{d} \delta_i m_j^{[A \cup \{i\},B]}&=\sum_{k \notin A\cup B} \epsilon_i m_{kj}^{[A \cup \{i\},B]}\, \text{d}g_{ik}+\frac{1}{2}\sum_{k \notin A\cup B}   \delta_i \left(m_{kkj}^{[A \cup \{i\}, B]}\right) \frac{\text{d}t}{N}, 
        \end{split}
        \end{equation}
and suitable variants thereof (\eg analogous identities for three point functions which follow by differentiation \wrt the external field). The second line is a straightforward consequence of It\^o's lemma, viewing the $i$-th column $ (g_{ik})_{k\neq i} = (g_{ik}(t))_{k\neq i}$ as a rescaled Brownian motion at time $t$. Using basic Gronwall arguments, the following proposition can be proved like [Lemmas 3.1 \& 3.2]\cite{ABSY}; since the proofs are very similar to those of \cite[Lemmas 3.1 \& 3.2]{ABSY} (see also \cite[Lemma 3.1, Remarks 2) \& 3)]{ABSY}), we skip the details. 

\begin{prop}\label{prop:apriori} Let $t\geq 0$ be sufficiently small and let $ A,B\subset \{1,\ldots, N\}$ be disjoint. Then, for some constant $C_t>0$, independent of $N, A$ and $B$, we have that
        \[\begin{split} \sup_{\substack{t_{ij} \in [0,t],\\ 1\leq i<j\leq N }}\sup_{\sigma\in \{-1,1\}^{|A|}} \big\| \delta_i m_{j}^{[A\cup\{i\},B]} \big( (g_{ij}(t_{ij})_{1\leq i<j\leq N })\big)\big\|_4& \leq  C_t\, N^{-(|\{i,j\}|-1)/2},\\
        \sup_{\substack{t_{ij} \in [0,t],\\ 1\leq i<j\leq N }}\sup_{\sigma\in \{-1,1\}^{|A|}} \big\| \delta_i m_{jk}^{[A\cup\{i\},B]} \big( (g_{ij}(t_{ij})_{1\leq i<j\leq N })\big)\big\|_4&\leq C_t\, N^{-(|\{i,j,k\}|-1)/2}
        \end{split}\]
for all $i,j,k\not \in A\cup B$. As a consequence, it follows that
        \[\begin{split} \sup_{\substack{t_{ij} \in [0,t],\\ 1\leq i<j\leq N }}\sup_{\sigma\in \{-1,1\}^{|A|}} \big\|  m_{ij}^{[A,B]} \big( (g_{ij}(t_{ij})_{1\leq i<j\leq N })\big)\big\|_4&\leq  C_t\, N^{-(|\{i,j\}|-1)/2},\\
        \sup_{\substack{t_{ij} \in [0,t],\\ 1\leq i<j\leq N }}\sup_{\sigma\in \{-1,1\}^{|A|}} \big\| m_{ijk}^{[A,B]} \big( (g_{ij}(t_{ij})_{1\leq i<j\leq N })\big)\big\|_2&\leq C_t\, N^{-(|\{i,j,k\}|-1)/2}. 
        \end{split}\]

\end{prop}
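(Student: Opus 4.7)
The proof closely adapts \cite[Lemmas 3.1 and 3.2]{ABSY}, with the extra bookkeeping required to cover the conditional measures $\langle\cdot\rangle^{[A,B]}$ uniformly in the independent evaluation times $t_{ij}\in[0,t]$ and the conditionings $\sigma\in\{-1,1\}^{|A|}$. The starting point is the It\^o identity \eqref{eq:ito2pt}; differentiating the second line with respect to an external field $h_k$ gives the analogous SDE
\[
d\delta_i m_{jk}^{[A\cup\{i\},B]} = \sum_{l\notin A\cup B}\epsilon_i m_{ljk}^{[A\cup\{i\},B]}\,dg_{il} + \frac{1}{2N}\sum_{l\notin A\cup B}\delta_i m_{lljk}^{[A\cup\{i\},B]}\,dt
\]
for the three-point quantity. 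At $t=0$, $H_N^{[A\cup\{i\},B]}$ reduces to a pure external-field Hamiltonian, the Gibbs measure factorises, and $\delta_i m_j^{[A\cup\{i\},B]}(0)=\mathbbm{1}_{i=j}$ while the three-point initial value vanishes whenever $|\{i,j,k\}|\ge 2$. These identities provide the correct $t=0$ seeds for the bootstrap.

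The plan is then a coupled Gronwall estimate for suitably rescaled $L^4$ norms. Define
\[
F_2(t) = \sup\, N^{(|\{i,j\}|-1)/2}\bigl\|\delta_i m_j^{[A\cup\{i\},B]}\bigr\|_4,\quad F_3(t) = \sup\, N^{(|\{i,j,k\}|-1)/2}\bigl\|\delta_i m_{jk}^{[A\cup\{i\},B]}\bigr\|_4,
\]
the suprema taken over admissible $A,B,i,j,k$, conditionings $\sigma$, and evaluation times. Applying Burkholder-Davis-Gundy to the martingale and Minkowski to pull the $L^2$ norm under the $ds$-integral, then using $m_{lj}^{[A\cup\{i\},B]}=(1-(m_l^{[A\cup\{i\},B]})^2)\delta_l m_j^{[A\cup\{i,l\},B]}$ to rewrite $\epsilon_i m_{lj}$ as a $\delta$-quantity controlled by $F_2$, the sums over $l\notin A\cup B$ split into $O(1)$ boundary terms with $l\in\{i,j\}$ (bounded crudely by $\|\cdot\|_\infty\le 2$) and $\approx N$ generic terms (each of order $O(F_2(s)^2/N^2)$ after the cardinality-based normalisation, so that the $N$-fold summation and the $1/N$ prefactor from the quadratic variation balance). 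The drifts are handled similarly together with the trivial bound $\|\delta_i m_{llj}\|_\infty\le C$, and an analogous analysis applies to the three-point SDE, where the four-point drift term is controlled directly by $\|m_{lljk}\|_\infty\le C$. The result is a closed differential inequality
\[
F_2(t)^2+F_3(t)^2 \le C_1 + C_2\int_0^t \bigl(F_2(s)^2+F_3(s)^2\bigr)\,ds,
\]
valid for $t$ in a small neighbourhood of $0$, and Gronwall yields the claimed uniform bounds $F_2(t)+F_3(t)\le C_t$.

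The ``as a consequence'' bounds follow from the algebraic identity $m_{ij}^{[A,B]}=(1-(m_i^{[A,B]})^2)\delta_i m_j^{[A\cup\{i\},B]}$ together with its once-differentiated version in $h_k$. Since $|1-m_i^2|\le 1$, the $L^4$-bound on $\delta_i m_j^{[A\cup\{i\},B]}$ transfers directly to $m_{ij}^{[A,B]}$. For the three-point function, one expresses $m_{ijk}^{[A,B]}$ as a sum of products of bounded $(1-m^2)$-type factors with $\delta_i m_{jk}$ and strictly lower-order correlations; Cauchy-Schwarz then downgrades the $L^4$ bound into the $L^2$ bound stated in the proposition.

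I expect the main technical obstacle to be the careful bookkeeping of the ``boundary'' terms in which the summation index $l$ coincides with $i,j$ or $k$: these have only $O(1)$ decay in $N$ but occur with multiplicity $O(1)$ rather than $O(N)$, so the per-term normalisation in the definition of $F_2$ and $F_3$ must be matched to the correct number of distinct indices in each term. The uniformity in the evaluation times $t_{ij}\in[0,t]$ introduces no essentially new idea: as in \cite{ABSY}, the It\^o expansion is applied along one column $(g_{il}(s))_{l\neq i}$ at a time while the remaining couplings are held frozen at arbitrary admissible values, and the Gronwall system closes simultaneously in all these parameters.
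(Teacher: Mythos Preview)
Your approach is the same as the paper's: the paper does not give an independent proof but simply points to \cite[Lemmas 3.1 \& 3.2]{ABSY} and invokes the Gronwall machinery there, exactly as you propose. The overall architecture of your sketch (It\^o expansion along one column, BDG for the martingale, recursive rewriting $m_{lj}=m_{ll}\delta_l m_j$, coupled Gronwall for the rescaled $L^4$ norms, and the algebraic reduction for the ``consequence'' part) is correct and matches \cite{ABSY}.

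There is one point where your sketch, read literally, would not close. You write that the two-point drift is handled ``together with the trivial bound $\|\delta_i m_{llj}\|_\infty\le C$'' and that ``the four-point drift term is controlled directly by $\|m_{lljk}\|_\infty\le C$''. Summed over $l$ and divided by $N$, a pointwise bound of order $1$ gives a drift of order $1$, which is too large by a factor $N^{1/2}$ (resp.\ $N$) for the two- (resp.\ three-) point estimate when the indices are distinct. What one actually does in \cite{ABSY} is expand these higher correlations algebraically,
\[
m_{llj}=-2m_l m_{lj},\qquad m_{lljk}=-2m_{lk}m_{lj}-2m_l m_{ljk},
\]
apply the product rule $\delta_i(fg)=(\delta_i f)(\epsilon_i g)+(\epsilon_i f)(\delta_i g)$, and then feed each factor back into $F_2$ or $F_3$. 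For generic $l$ this gives terms of size $O(F_2^2/N)$ or $O(F_3/N)$, so that after summing over $l$ the drift contributes at the correct order and the (nonlinear) Gronwall system closes for small $t$. Once you replace the crude sup-norm bound on the drift by this expansion, your argument goes through.
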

\section{Derivation of the Path Representation}\label{sec:Tbnd}
In this section, we derive the path representation of  $\sqrt N m_{ij}$ and we prove Prop.\ \ref{prop:distT}. 

The path representation follows by combining and iterating the key identities in  \eqref{eq:ito2pt}. To make this precise, let us recall the definitions in \eqref{def:wgam}, \eqref{def:Tn} and \eqref{def:XM}, and let us recall that $\Gamma^{ij}_{n+1} $ denotes the set of self-avoiding paths among vertices $\{1,\ldots, N\}$ from $i$ to $j$ of length $ n+1$. Moreover, given
a cavity set $ B\subset \{1,\ldots,N\}$, let us define
        \begin{equation}\label{def:E}
        \begin{split}
        E_{ij}^{(B)}&=m_{ii}\bigg(\sum_{k \not \in B\cup\{i\}}\int_{0}^{t}\big(\epsilon_{i}m_{kj}^{[i,B]}-m_{kj}^{( B\cup \{i\})}\big)\text{d}g_{ik}+\frac{1}{2}\sum_{k\not \in B\cup\{i\}}\int_{0}^{t}\delta_{i}\big(m_{kjj}^{\left[i,B\right]}\big)\left(s\right)\frac{\text{d}s}{N}\bigg), 
        \end{split}
        \end{equation}
as well as  
        \[\begin{split}
        E_{0}&=E_{ij}^{(\emptyset)}, \hspace{0.5cm} E_1=m_{ii}m_{jj}^{(i)}\sum_{k_{1}\neq i,j}g_{ik_{1}}E_{jk_{1}}^{\left(i\right)},\\
        E_{n}&\stackrel{n\geq 2}{=}m_{ii} m_{jj}^{(i)}\sum_{\substack{\gamma =(ik_1\ldots k_nj)\in \Gamma^{ij}_{n+1}}} \frac{ w(\gamma)\, E_{k_{n-1}k_n}^{(i,k_1,\ldots,k_{n-2},j )}  }{m_{k_{n-1} k_{n-1}}^{(i,k_1,\ldots,k_{n-2} )} g_{k_{n-1}k_n} m_{k_{n} k_{n}}^{(i,k_1,\ldots,k_{n-1},j )} } ,\\
         A_{n}&\stackrel{n\geq 2}{=}m_{ii} m_{jj}^{(i)}\sum_{\substack{\gamma =(ik_1\ldots k_nj) \in \Gamma^{ij}_{n+1}}} \frac{ w(\gamma)\, m_{k_{n-1}k_n}^{(i,k_1,\ldots,k_{n-2},j )} }{m_{k_{n-1} k_{n-1}}^{(i,k_1,\ldots,k_{n-2} )} g_{k_{n-1}k_n} m_{k_{n} k_{n}}^{(i,k_1,\ldots,k_{n-1},j )} }. 
        \end{split}\]
Here, we introduced the shorthand notation $\gamma= (ik_1k_2\ldots k_nj)\equiv (\{i,k_1\},\ldots,\{k_n,j\})$. Then, for every $M\geq 2$, a straightforward induction argument shows that  
        \begin{equation}\label{eq:thekey}
        \sqrt{N} m_{ij}=\sum_{n=0}^{M} X_{n} +\sum_{n=0}^{M}\sqrt{N}E_{n}+\sqrt{N}A_{M+1}.
        \end{equation}
Indeed, combining the two identities in \eqref{eq:ito2pt}, we have first of all that 
        \[m_{ij}=m_{ii} m_{jj}^{(i)}g_{ij}+E_0 +m_{ii} \sum_{k_1 \neq i,j} g_{ik_1}m_{k_1j}^{(i)}= N^{-1/2}X_0 + E_0 +m_{ii} \sum_{k_1 \neq i,j} g_{ik_1}m_{k_1j}^{(i)}. \]
Now, we apply the two identities in \eqref{eq:ito2pt} to $m_{k_1j}^{(i)} = m_{jk_1}^{(i)} $, extracting the factor $ m_{jj}^{(i)}$ and expanding through It\^o's lemma in the disorder $ (g_{jk})_{k\neq j}$. This yields 
        \[\begin{split}
        m_{ij}& =N^{-1/2}X_0  +m_{ii}m_{jj}^{\left(i\right)}\sum_{k_{1}\neq i,j}g_{ik_{1}}m_{k_{1}k_{1}}^{\left(i,j\right)}g_{k_{1}j} + E_0+m_{ii} m_{jj}^{(i)}\sum_{k_{1}\neq i,j}g_{ik_{1}}E_{jk_{1}}^{\left(i\right)} \\
        &\hspace{0.5cm} +m_{ii} m_{jj}^{\left(i\right)}\sum_{k_{1}\neq i,j}\sum_{k_2\neq i,j,k_{1}} g_{ik_{1}}m_{k_{1}k_{2}}^{\left(i,j\right)}g_{k_{2}j}\\
        & = N^{-1/2}X_0 +N^{-1/2}X_1 + E_0+E_1+A_{2}.
        \end{split}\]
Iterating this procedure by expanding the two point function $m_{k_{M-1}k_M}^{(i,k_1,\ldots,k_{M-2},j )} $ in the definition $A_M$ \wrt $(g_{k_{M-1}l})_{l\neq i, k_1,\ldots,k_{M-1},j} $, we conclude \eqref{eq:thekey} for every $M\geq 2$.

\begin{proof}[Proof of Prop.\ \ref{prop:distT}]
It is enough to provide suitable bounds on $ \sum_{n=0}^{M}E_{n}$ and $A_{M+1} $. We start to estimate the contributions $ E_n$, for $n\geq 0$. To control $E_0 $, we use that $ 0\leq m_{ii}\leq 1$ and we consider first the martingale term in \eqref{def:E}. Using that 
        \[\Big\Vert \sum_{k\neq i}\int_{0}^{t}\big(\epsilon_{i}m_{kj}^{\left[i\right]}-m_{kj}^{\left(i\right)}\big)\text{d}g_{ik}\Big\Vert _{2}=\bigg( \sum_{k\neq i}\int_{0}^{t}\big\Vert\big(\epsilon_{i}m_{kj}^{\left[i\right]}-m_{kj}^{\left(i\right)}\big)(s)\big\Vert _{2}^2  \frac{\text{d}s}{N}\bigg)^{1/2},  \]
an application of It\^o's lemma \wrt $ (g_{ik})_{k\neq i} $ combined with the triangle inequality,  the basic identity $m_{kjll}^{\left[i\right]}=-2m_{lk}^{\left[i\right]} m_{lj}^{\left[i\right]}-2m_{l}^{\left[i\right]} m_{ljk}^{\left[i\right]}$ and Prop.\ \ref{prop:apriori} implies for $k\neq j$ that 
    \[\begin{split} 
    \big\Vert \big(\epsilon_{i}m_{kj}^{\left[i\right]}-m_{kj}^{\left(i\right)}\big)(s)\big\Vert _{2}
    &=\Big\Vert \sum_{l\ne i}\int_{0}^{s}\delta_{i}m_{kjl}^{\left[i\right]}\text{d}g_{il}+\frac{1}{2}\sum_{l\ne i}\int_{0}^{s}\epsilon_{i}m_{kjll}^{\left[i\right]}\left(u\right)\frac{\text{d} u}{N}\Big\Vert _{2} \leq Cs^{1/2} N^{-1} 
    \end{split}\]
and, similarly, for $k=j$ that $
    \big\Vert \big(\epsilon_{i}m_{kj}^{\left[i\right]}-m_{kj}^{\left(i\right)}\big)(s)\big\Vert _{2} \leq Cs^{1/2} N^{-1/2} $. Hence, we get
        \[ \Big\Vert \sum_{k\neq i}\int_{0}^{t}\big(\epsilon_{i}m_{kj}^{\left[i\right]}-m_{kj}^{\left(i\right)}\big)\text{d}g_{ik}\Big\Vert _{2}\leq CN^{-1}t \]
and arguing similarly for the drift term in \eqref{def:E} yields  $\|E_0\|_2\leq CN^{-1}t$. The same arguments imply $ \|E_{k_1k_2}^{(B)}\|_2\leq CN^{-1}t $ if $k_1\neq k_2$, for $C>0$ independent of $N, t$ and $B$. 

Next, to estimate $ E_n$ for $n\geq 1$, we proceed iteratively. In case of $E_1$, we bound 
        \[\begin{split}
        \Big\| m_{ii} m_{jj}^{(i)}\!\!\sum_{k_{1}\neq i,j}g_{ik_{1}}E_{jk_{1}}^{\left(i\right)}\Big\|^2_2\leq \Big\| \sum_{k_{1}\neq i,j}g_{ik_{1}}E_{jk_{1}}^{\left(i\right)}\Big\|^2_2 = \frac{t}N \sum_{k_{1}\neq i,j}\big\|E_{jk_{1}}^{\left(i\right)}\big\|^2_2 \leq \big(Ct N^{-1}\big)^2,
        \end{split}\]
where, in the second step, we used the independence of the disorder $ (g_{ik})_{k\neq i}$ from $ E_{jk_1}^{(i)}$. Repeating the independence argument and using the fact that $ 0\leq m_{kk}^{[A,B]}\leq 1$ $n$ times in order to control $E_n$ for $n\geq 2$, we arrive at 
        \[\begin{split}
          \big \| E_n \big\|_2^2 & = \Big\| m_{ii} m_{jj}^{(i)}\!\!\!\sum_{\substack{\gamma =(ik_1\ldots k_nj)\\ \in \Gamma^{ij}_{n+1}}}   g_{ik_1} m_{k_1k_1}^{(i,j)} g_{k_1k_2}  m_{k_2k_2}^{(i,k_1,j)}\ldots g_{k_{n-2}k_{n-1}} \, E_{k_{n-1}k_n}^{(i,k_1,\ldots,k_{n-2},j )} g_{k_nj}    \Big\|_2^2 \\
          &\leq  \frac{t}N \sum_{k_1\neq i}  \Big\| m_{k_1k_1}^{(i,j)}\!\! \sum_{\substack{\gamma =(k_1\ldots k_nj) \in \Gamma^{ij}_{n}}}    g_{k_1k_2}  m_{k_2k_2}^{(i,k_1,j)}\ldots g_{k_{n-2}k_{n-1}} \, E_{k_{n-1}k_n}^{(i,k_1,\ldots,k_{n-2},j )} g_{k_nj}    \Big\|_2^2\\
          &\leq   \frac{t}{N} \sum_{k_1\neq i} \frac{t}{N}\sum_{k_2\neq i, k_1} \Big\|\sum_{\substack{\gamma =(k_2\ldots k_nj) \in \Gamma^{ij}_{n-1}}}     m_{k_2k_2}^{(i,k_1,j)}\ldots g_{k_{n-2}k_{n-1}} \, E_{k_{n-1}k_n}^{(i,k_1,\ldots,k_{n-2},j )} g_{k_nj}    \Big\|_2^2\\
          &\leq t^{n} \sup_{k\neq l} \big\| E_{kl}^{(i,k_1,\ldots,k_{n-2},j )}\big\|_2^2 \leq Ct^{n} N^{-2}. 
        \end{split}\]
Similarly, we proceed to control $A_{M+1}$ in which case Prop.\ \ref{prop:apriori} implies
        \[  \| A_{M+1}  \|_2 \leq C\, t^{M/2} \sup_{k\neq l} \big\| m_{kl}^{(i,k_1,\ldots,k_{n-2},j )}\big\|_2 \leq C N^{-1/2}t^{M/2}  \]
and collecting the above bounds, this concludes that 
        \[ 
        \Big\| \sqrt{N}   m_{ij} - \sum_{n=0}^{M} X_n \Big\|_2 \leq \sum_{n=0}^M \sqrt{N}  \| E_n \|_2  + \sqrt{N} \|A_{M+1}\|_2 \leq C N^{-1/2} + C t^{M/2}.\qedhere
        \]
\end{proof}


\section{Limiting Distribution of Path Vector} \label{sec:stein}
Before proving Prop.\ \ref{prop:Tlim}, we begin with some preliminary observations. Given the disorder $\tbf G = (g_{ij})_{1\leq i<j\leq N}$, consider a monomial $g_{k_1 l_1}^{a_1} \dots g_{k_n l_n}^{a_n}$ for mutually distinct pairs $(k_i,l_i)$ and $a_i\in\bN_0$, for $i\in\{1,\ldots,n\}$ and $n\in\bN$. Assuming $ f:\bR^{N(N-1)/2}\to\bR$ to be smooth and bounded with bounded derivatives, the following straightforward estimates are a direct consequence of integration by parts and induction: if all $a_1, \dots, a_n \in 2\bN $, then 
        \begin{equation} \label{eq:ind1}
        \Big|\bE\, g_{k_1 l_1}^{a_1} \dots g_{k_n l_n}^{a_n} f({\bf G}) -  \bE f({\bf G})   \prod_{k=1}^n \Big(\frac tN\Big)^{\frac{a_k}{2}}(a_k-1)!! \Big| \leq C \Big(\frac{t}{N}\Big)^{1+\frac12\sum_{i=1}^{n}a_i}.
        \end{equation}
Otherwise, if $d$ denotes the number of odd integers among $a_1, \ldots, a_n$, we have that
        \begin{equation} \label{eq:ind2}
        \big|\bE \, g_{k_1 l_1}^{a_1} \dots g_{k_n l_n}^{a_n} f({\bf G })  \big| \leq C \Big(\frac{t}{N} \Big)^{ \frac{d}{2}+\frac12\sum_{i=1}^{n}a_i}.
\end{equation}
The constant $C$ in \eqref{eq:ind1} and \eqref{eq:ind2} depends on the norms $\| \partial_\alpha f\|_{\infty}$ for multi-indices with degree $|\alpha|\leq \sum_{i=1}^n a_i$ and on the powers $a_i$, but it is independent of $N$. Below, we apply \eqref{eq:ind1} and \eqref{eq:ind2} in particular to estimate (slight modifications of) contributions of the form $ \bE \, w(\gamma) w(\gamma')$, given two paths $ \gamma, \gamma' \in \Gamma_{n+1}^{ij}$ with $ n+1\leq M+1$ for fixed $M\in\bN$. 
\begin{lemma}\label{lm:m-ind}
Fix $n\in\bN$ and mutually distinct indices $k_l\in \{1,\ldots,N\}$ for $l\in\{1,\dots, n\}$. Moreover, let $ b_l\in\bN $, $ B_l\subset\{1,\ldots, N\}$ and suppose that $\max_{1\leq l\leq n}|b_l|$ and $ \max_{1\leq l\leq n}|B_l| $ are bounded uniformly in $N$. Then, for $t\geq0$ small enough, we have that
    \be\label{eq:weighteval}\bigg|\bE \,\prod_{l=1}^n \left(m_{k_lk_l}^{(B_l)}\right)^{b_l} -\prod_{l=1}^n \, E \,\emph{sech}^{2b_l} \big(h+\sqrt{tq}Z \big) \bigg |\leq CN^{-1/4},
    \ee
where $Z\sim \mathcal{N}(0,1)$ and $C = C_t$ depends on $t$, but is independent of $N$.
\end{lemma}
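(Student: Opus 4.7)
The plan is to reduce to joint asymptotic independence and Gaussianity of cavity fields at the distinct rows $k_1,\dots,k_n$, then condition on the rest of the disorder to turn the product of $\text{sech}^{2b_l}(h+\cdot)$ factors into a product of Gaussian integrals, and finally replace the conditional variance by $tq$. The first step is a \emph{cavity reduction}: using the standard high-temperature cavity identity (derivable via an It\^o expansion in the $k_l$-th row as in Section \ref{sec:Tbnd}), one has $m_{k_l}^{(B_l)}=\tanh(h+X_l)+\rho_l$ with
\[ X_l:=\sum_{j\notin B_l\cup\{k_l\}} g_{k_l j}\,m_j^{(B_l\cup\{k_l\})}, \]
where the correction $\rho_l$ is controlled by Prop.\ \ref{prop:apriori} and, at the level of the final expectation, absorbed by the self-consistent equation for $q$. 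Using $m_{kk}^{(B)}=1-(m_k^{(B)})^2$ and $1-\tanh^2=\text{sech}^2$, the lemma reduces to estimating $\bE\prod_l \text{sech}^{2b_l}(h+X_l)$.

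The second step is a \emph{homogenization}: let $B:=\bigcup_l B_l$, $K:=\{k_1,\dots,k_n\}$, $A_j:=m_j^{(B\cup K)}$, and replace $X_l$ by $\tilde X_l:=\sum_{j\notin B\cup K} g_{k_l j}A_j$. Telescoping through a uniformly bounded number of cavity removals and using $\|\delta_i m_j^{[A\cup\{i\},B]}\|_2=O(N^{-1/2})$ from Prop.\ \ref{prop:apriori} gives $\|\tilde X_l-X_l\|_2=O(N^{-1/2})$. The structural payoff is that every $A_j$ is now measurable w.r.t.\ $\cG:=\sigma((g_{ab})_{a,b\notin B\cup K})$, while each row $(g_{k_l j})_{j\notin B\cup K}$ is independent of $\cG$, and for $l\neq l'$ the two rows are disjoint sets of independent Gaussians (the crossing entry $g_{k_l k_{l'}}$ is avoided because we restrict to $j\notin K$). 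Therefore, conditionally on $\cG$, the variables $\tilde X_1,\dots,\tilde X_n$ are independent centered Gaussians with common variance $tq_N^{(B\cup K)}:=\tfrac{t}{N}\sum_{j\notin B\cup K}A_j^2$, and direct Gaussian integration gives
\[ \bE\prod_l \text{sech}^{2b_l}(h+\tilde X_l)=\bE\prod_l F_{b_l}\!\bigl(\sqrt{tq_N^{(B\cup K)}}\bigr),\qquad F_b(\sigma):=E\,\text{sech}^{2b}(h+\sigma Z). \]

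The third step is \emph{replacing $q_N$ by $q$}: I would invoke the high-temperature concentration $\|q_N^{(B\cup K)}-q\|_2=O(N^{-1/2})$, which is a minor extension of Prop.\ \ref{prop:apriori}. Since each $F_{b_l}$ is bounded and Lipschitz and $\sigma\mapsto\sqrt\sigma$ is $\tfrac12$-H\"older, this gives $\bE|F_{b_l}(\sqrt{tq_N^{(B\cup K)}})-F_{b_l}(\sqrt{tq})|=O(N^{-1/4})$, and telescoping over the $n$ factors yields the claimed bound. The main obstacle is precisely this last transfer: passing from $q_N$ to $q$ through a square root degrades the natural $N^{-1/2}$ variance rate to $N^{-1/4}$, and the estimate is sharpest when $q$ is near zero. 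A secondary technical subtlety is making the cavity reduction in Step 1 fully rigorous with an explicit control of the Onsager-type bias, which is handled either by expanding $\tanh$ to higher order and re-summing via the Gaussian IBP identities \eqref{eq:ind1}--\eqref{eq:ind2}, or by exploiting the self-consistency that defines $q$.
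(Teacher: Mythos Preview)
Your proposal is correct and follows essentially the same route as the paper: approximate each $m_{k_l}^{(B_l)}$ by $\tanh$ of a cavity field, homogenize all cavity fields to the common set $B\cup K$ so that the resulting linear forms in disjoint Gaussian rows are conditionally independent with variance $t q_N^{(B\cup K)}$, and then replace $q_N^{(B\cup K)}$ by $q$ via high-temperature concentration (which the paper cites from \cite[Prop.~1.2]{ABSY} rather than deriving from Prop.~\ref{prop:apriori}). The only cosmetic difference is that the paper normalizes to get exact standard Gaussians $Z_{k_l}^{(B)}$ while you keep the conditional variance explicit, but the two formulations are equivalent.
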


\begin{proof}
If $h=0$, so that $q=q_{t,h}=0$ for $t$ small, the \lhs in \eqref{eq:weighteval} is equal to zero, so let us assume that $h \neq 0$. In this case, the claim follows by slightly generalizing some of the arguments detailed in the proofs of \cite[Prop.\ 1.2 \& Lemma 4.1]{ABSY}.

We first recall that the creation of a cavity does not affect the local magnetization significantly. Indeed, using It\^o's formula to expand w.r.t.\ $ (g_{jk})_{k=1}^N$, we have 
  \[
  m_i^{[\{j\},B]}-m_i^{(B\cup\{j\})}=\sigma_j \sum_{k \not \in B\cup\{j\}} \int_{0}^{t}m_{ik}^{[\{j\},B]}\text d g_{jk}+ \frac{1}{2}\sum_{k\neq j} \int_{0}^{t} m_{ikk}^{[\{j\},B]}(s)\frac{\text ds}{N}.
  \]
Jensen's inequality and Prop.\ \ref{prop:apriori} imply that
  \begin{equation*} 
      \big \| m_i^{(B)}-m_i^{(B,j)}\big \|_2 =    \big \| \big\langle m_i^{[\{j\},B]}-m_i^{(B,j\})} \big\rangle \big \|_2  \leq \big\langle \big \|  m_i^{[j,B]}-m_i^{(B,j)} \big \|_2 \big\rangle \leq CN^{-1/2}
  \end{equation*}
  for every $i \not\in B\cup\{j\}$. Iterating this argument, we thus conclude that 
        \be \label{eq:auxlm5}\big \| m_i-m_i^{(B)}\big \|_2\leq CN^{-1/2}, \hspace{0.5cm} \big \| q_N-q_N^{(B)}\big \|_2\leq CN^{-1/2}, \ee
where we set for $ B\subset \{1,\ldots,N\}$
        \[ q_N^{(B)} = \frac{1}{N}\sum_{k\not\in B} \big(m_k^{(B)}\big)^2 \hspace{0.5cm} \text{and}\hspace{0.5cm}q_N = q_N^{(\emptyset)}.  \]
Similarly, proceeding as in \cite[Lemma 4.1]{ABSY}, It\^o's formula and Prop.\ \ref{prop:apriori} imply 
        \[\Big \| m_i^{(B)}-\tanh\Big(h+ \sqrt{t q_N^{(B\cup\{i\})}} Z_i^{(B)}\Big)\Big \|_2 \leq C N^{-1/2}, \]
where  
        \be \label{def:ZiB} Z_i^{(B)}= \big(t q_N^{(B\cup\{i\})}\big)^{-1/2} \sum_{j \notin B \cup \{i\}}g_{ij}m_j^{(B\cup\{i\})}\sim \cN(0,1). \ee 

Now, consider indices $ k_l$, powers $b_l $ and cavity sets $ B_l$, for $l\in\{1,\ldots,n\}$, as in the statement of the lemma. Then, a slight variation of the previous observations implies 
       \[\Big \| m_{k_l}^{(B_l)}-\tanh\Big(h+  \sqrt{t q_N^{(B)}} Z^{(B)}_{k_l}\Big)\Big \|_2 \leq C N^{-1/2} \]
for the union $B = \{k_1,\ldots,k_n \} \cup \bigcup_{j=1}^n B_j$ and a basic covariance computation shows   
        \[  \Big(Z^{(B)}_{k_l} \Big)_{l=1}^n \sim \cN\big(0, \textbf{1}_{\bR^n}\big).\]
Combining this with $a^u-b^u=(a-b)\sum_{k=0}^u a^{u-k}b^k$, $0\leq m_{k_lk_l}^{(B_l)}=1-\big(m_{k_l}^{(B_l)}\big)^2\leq 1 $, $ 0\leq \text{sech}(\cdot)\leq 1$, Cauchy-Schwarz, \eqref{eq:auxlm5} and \cite[Prop.\ 1.2]{ABSY}, we thus arrive at
    \[\begin{split}
    &\bigg|\bE \,\prod_{l=1}^n \left(m_{k_lk_l}^{(B_l)}\right)^{b_l} -\prod_{l=1}^n \, \bE \,\text{sech}^{2b_l} \big (h+\sqrt{tq}Z \big) \bigg |\\
    &\leq \bigg|\bE \prod_{l=1}^n\text{sech}^{2b_l} \big (h+\sqrt{tq}Z_{k_l}^{(B)} \big) - \bE\prod_{l=1}^n \, \,\text{sech}^{2b_l} \big (h+\sqrt{tq_{N}^{(B)}} Z_{k_l}^{(B)} \big) \bigg | + CN^{-1/4} \\
    & \leq C \,\bE\, \big| q- q_{N}^{(B)} \big| + CN^{-1/4}\leq CN^{-1/4}. \qedhere
    \end{split}\] 
\end{proof}

Equipped with the previous two lemmas, we are now ready to prove Proposition \ref{prop:Tlim}. 

\begin{proof}[Proof of Prop.\ \ref{prop:Tlim} ] 
Recall the definition of $ \tbf X^{(M)} $ in \eqref{def:XM} and that $\tbf U^{(M)}$ equals
        \[ \tbf U^{(M)} =\tbf Y^{(M)} \text{sech}^2 \big(h + \sqrt{t q} Z_2 \big)\text{sech}^2 \big(h + \sqrt{t q} Z_3 \big),\]
where $ \tbf Y^{(M)} = (Y_n)_{n=0}^M$ is a centered Gaussian vector with covariance $ E\, Y_k Y_l = \delta_{kl }t (t\mu)^k$ and $Z_2, Z_3$ are \iid standard normal independent of $  \tbf Y^{(M)}$. Our goal is to prove that
		\be\label{eq:glsec4} \lim_{N\to\infty}  \bE f  ( \tbf{X}^{(M)}  ) =  E f ( \tbf{U}^{(M)})   \ee
for all $f\in C^\infty_c(\bR^{M+1})$. Notice that the family $ \tbf{U}^{(M)}, (\tbf{X}^{(M)})_{N\geq 2}$ of random variables is tight, because $ \max_{n=0,\ldots, M } \|U_n\|_2\leq C,  \max_{n=0,\ldots, M } \| X_n\|_2\leq C$ for some $ C>0$ that is independent of $N$. Combined with an application of Stone-Weierstrass, \eqref{eq:glsec4} implies $\lim_{N\to\infty}  \bE f ( \tbf{X}^{(M)} ) =  E  f( \tbf{U}^{(M)} )$ for all $f\in C_b(\bR^{M+1})$ so that $ \tbf{X}^{(M)} \limlaw \tbf{U}^{(M)}$ as $N\to\infty$. 

We split the proof of \eqref{eq:glsec4} into two parts and define  
		\[\Sigma^{\frac12} =  \Big(   \delta_{kl} \sqrt{ t (t\mu)^k } \Big)_{  k,l=0}^{M}\in\bR^{M+1\times M+1}.\]
Moreover, denote by $ \tbf W^{(M)} = (W_n)_{n=0}^M \sim \cN(0, \tbf{1}_{\bR^{M+1}})$ a standard Gaussian vector \st
		\[\begin{split} 
		\tbf U^{(M)} &\stackrel{\cD}{=} \Sigma^{\frac12}  \tbf W^{(M)} \text{sech}^2 \big(h + \sqrt{t q} Z_2 \big)\text{sech}^2 \big(h + \sqrt{t q} Z_3 \big) \\
		&\stackrel{\cD}{=} \Sigma^{\frac12}  \tbf W^{(M)} \text{sech}^2 \big(h + \sqrt{t q} Z_i^{(j)} \big)\text{sech}^2 \big(h + \sqrt{t q}  Z_j^{(i)} \big)
		\end{split} \] 
with $ \big( Z_i^{(j)}, Z_j^{(i)}\big)\sim\cN(0,\tbf 1_{\bR^2})$ as defined in \eqref{def:ZiB}. Then, we bound
	\be\label{eq:triangle} \begin{split}
    	 |\bE f  ( \tbf{X}^{(M)}  )-  E f  ( \tbf{U}^{(M)}  )  | &\leq  |\bE E f  ( m_{ii}  m_{jj}^{(i)} \Sigma^{\frac{1}{2}}   \tbf W^{(M)}   )- E f (\tbf{U}^{(M)})|  \\
	 &\hspace{0.5cm}  + |\bE  f  ( \tbf{X}^{(M)}  )- \bE  E f  (m_{ii}  m_{jj}^{(i)} \Sigma^{\frac{1}{2}}   \tbf W^{(M)} )| 
	 \end{split} \ee
and we claim that the two terms on the \rhs in \eqref{eq:triangle} vanish as $N\to \infty$. The first term can be controlled using the same ideas as in the proof of Lemma \ref{lm:m-ind}: we find that
		\[\begin{split}
		& | \bE E f (  \Sigma^{1/2}  \tbf W^{(M)} \text{sech}^2 (h + \sqrt{t q} Z_i^{(j)} )\text{sech}^2 (h + \sqrt{t q}  Z_j^{(i)} )) -\bE  E f ( m_{ii}  m_{jj}^{(i)} \Sigma^{\frac{1}{2}}   \tbf W^{(M)}  )|\\
		&\leq  \| | \nabla f| \|_\infty  \|  |\Sigma^{\frac{1}{2}}\tbf W^{(M)} | \|_2  \|   m_{ii}  m_{jj}^{(i)} - \text{sech}^2( h + \sqrt{tq} Z_i^{(j)}) \text{sech}^2( h + \sqrt{tq} Z_j^{(i)})\|_2 \leq \frac{C} {N^{1/4}}  
		\end{split}\]
for some $C=C_M>0$ that is independent of $N$. Here, the last step uses \cite[Prop.\ 1.2]{ABSY}.

Let us now focus on the second term on the \rhs in \eqref{eq:triangle}. Here, we apply a suitable variant of Stein's method that takes into account the randomness of $ m_{ii}  m_{jj}^{(i)}$ (similar arguments were used recently in \cite{GSS}). For $f\in C^\infty_c(\bR^{M+1})$ and $ 0< \cC\in\bR^{M+1\times M+1}$, set  
	$$V_{f,\cC}( \tbf x) = -\int_0^\infty du\,\Big[ E f\big (  e^{-u} \tbf x+ \sqrt{1-e^{-2u}} \,\cC^{\frac{1}{2}} \tbf{W}^{(M)}\big)  - E  f \big(\cC^{\frac{1}{2}} \tbf{W}^{(M)}\big) \Big]. $$
By the multivariate Stein equation (see \eg \cite[Lemma 2.6]{CGS}), it follows that 
 		\[ f (\tbf x) - E f \big( \cC^{\frac12}\tbf{W}^{(M)}\big) = \text{tr}\,  \big(\cC [\nabla^2 V_{f,\cC}] (\tbf x )\big)- \tbf x\cdot [\nabla V_{f,\cC}]  (\tbf x)
		\]
and an elementary bound shows that $ \|  \partial_\alpha V_{f,\cC} \|_{\infty} \leq |\alpha|^{-1}  \|\partial_\alpha f\|_{\infty}$. Choosing $ \tbf x = \tbf{X}^{(M)}$, $ \cC^{\frac12} = m_{ii}  m_{jj}^{(i)}\Sigma^{\frac{1}{2}} $ and taking the expectation over all the random variables yields 
		\be \label{eq:Stein-tr} \begin{split}
		&\bE f\big(\tbf{X}^{(M)}\big) - E f \big( m_{ii}  m_{jj}^{(i)} \Sigma^{\frac12}\tbf{W}^{(M)}\big) \\
		&= \sum_{n=0}^M  \bE    \big(m_{ii} {m_{jj}^{(i)}}\big)^2 t (t\mu)^{n} \big[\partial_{n}^2 V\big]  \big(\tbf{X}^{(M)}\big) -  \sum_{n=0}^M \bE\, X_n \big[\partial_n V\big] \big(\tbf{X}^{(M)}\big)  ,  
		\end{split}\ee
where we write in the remainder $ V = V_{f,\cC}$ for this choice of $\cC$ and fixed $f\in C^{\infty}_c(\bR^{M+1})$. Prop.\ \ref{prop:Tlim} thus follows if we show that the \rhs in \eqref{eq:Stein-tr} vanishes in the limit $N\to\infty$.
Setting $w^{(i)}(\gamma)= w(\gamma)/g_{ik_1}$ if $ \gamma=(ik_1k_2\ldots k_nj)$, integration by parts in $g_{i\bullet}$ implies
    \[\begin{split}
    &\bE X_n \big[\partial_n V\big] \big(\tbf{X}^{(M)}\big)\\ 
    &=\frac{t}{\sqrt N}\sum_{\substack{\gamma={(ik_1 \dots k_{n}j)}  \in \Gamma_{n+1}^{ij}}} \bE  \, (\partial_{g_{ik_1}} m_{ii}) m_{jj}^{(i)}  w^{(i)}(\gamma)\big[\partial_n V\big] \big(\tbf{X}^{(M)}\big)\\
     &\hspace{0.5cm}+\frac{t}{\sqrt N}\sum_{l=0}^{M}\sum_{\substack{\gamma={(ik_1 \dots k_{n}j)} \in \Gamma_{n+1}^{ij}}}  \bE\,  m_{ii} (\partial_{g_{ik_1}}m_{ii}) (m_{jj}^{(i)})^2 w^{(i)}(\gamma)  \big[\partial_l \partial_n \widetilde{V}\big]  \big(\tbf{X}^{(M)}\big) \\
   &\hspace{0.5cm}+ t \sum_{l=0}^M \sum_{\substack{\gamma =(ik_1\ldots k_nj)\in \Gamma^{ij}_{n+1}, \\ \gamma' =(ik_1 k_2'\ldots k_l'j) \in \Gamma^{ij}_{l+1}}} \bE \big (m_{ii} m_{jj}^{(i)}\big)^2  w^{(i)}(\gamma) w^{(i)}(\gamma')   \big[ \partial_{l}\partial_n V\big] \big(\tbf{X}^{(M)}\big)  \\
    &\hspace{0.5cm}+ t  \sum_{l=0}^M \sum_{\substack{\gamma =(ik_1\ldots k_nj)\in \Gamma^{ij}_{n+1}, \\ \gamma' =(ik_1' k_2'\ldots k_l'j) \in \Gamma^{ij}_{l+1}}}\!\!\!\bE  \, m_{ii} (\partial_{g_{ik_1}}m_{ii}) (m_{jj}^{(i)})^2  w^{(i)}(\gamma) w (\gamma')   \big[\partial_{l}\partial_n V\big] \big(\tbf{X}^{(M)}\big)  \\
 &=\Sigma_{1}^{(n)}+\Sigma_{2}^{(n)}+\Sigma_{3}^{(n)}+\Sigma_{4}^{(n)},
     \end{split}\]
where $\Sigma_{j}^{(n)}$ is defined as the contribution in the $j$-th line on the \rhs and where we set
		\begin{equation*}
		\label{e:F}
		 \big[\partial_l \partial_n \widetilde{V}\big](\tbf x) =  -\int_0^\infty du\,e^{-u } \sqrt{1-e^{-2u}} E  \big(  \Sigma^{\frac{1}{2}}  \tbf{W}^{(M)})_l  [\partial_l \partial_n f]\big(   e^{-u}  \tbf{x} + \sqrt{1-e^{-2u}} \cC^{\frac{1}{2}}\tbf{W}^{(M)}\big).
		\end{equation*}
Notice that $ \| \partial_l \partial_n \widetilde{V}\|_{\infty}\leq C \|\partial_l \partial_n f\|_\infty \int_0^\infty du\, e^{-u} \leq C$ for some $C=C_{f,t,M}$. 

It now turns out that $ \Sigma_{1}^{(n)}, \Sigma_{2}^{(n)}$ and $\Sigma_{4}^{(n)}$ vanish in the limit $N\to\infty$ and that $\Sigma_{3}^{(n)}$ cancels, up to another error that is negligible, with the $n$-th summand in the first term on the \rhs in \eqref{eq:Stein-tr}. To see this, using that $ \|  \partial_n V  \|_{\infty} \leq    \|f\|_{\infty}$, Cauchy-Schwarz implies  
		\[\begin{split}
		|\Sigma_{1}^{(n)}|^2 &\leq  \frac CN\sum_{\substack{\gamma={(ik_1 \dots k_{n}j)}  \in \Gamma_{n+1}^{ij}, \\ \gamma'={(ik_1' \dots k_{n}'j)}  \in \Gamma_{n+1}^{ij} }} \bE   (\partial_{g_{ik_1}} m_{ii}) (\partial_{g_{ik_1'}} m_{ii})    w^{(i)}(\gamma)w^{(i)}(\gamma')\\
		& = \frac CN\sum_{u=0}^{n-1} \sum_{\substack{\gamma={(ik_1 \dots k_{n}j)}  \in \Gamma_{n+1}^{ij}, \\ \gamma'={(ik_1' \dots k_{n}'j)}  \in \Gamma_{n+1}^{ij} : |\gamma\cap\gamma'|=u}} \bE   (\partial_{g_{ik_1}} m_{ii}) (\partial_{g_{ik_1'}} m_{ii})    w^{(i)}(\gamma)w^{(i)}(\gamma')\\
		& + \frac CN \sum_{\gamma={(ik_1 \dots k_{n}j)}  \in \Gamma_{n+1}^{ij}} \bE   (\partial_{g_{ik_1}} m_{ii})^2   w^{(i)}(\gamma)^2.
		\end{split}\] 
		Recalling the explicit form $  w^{(i)}(\gamma) = m_{k_1k_1}^{(i,j)} g_{k_1k_2}  m_{k_2k_2}^{(i,k_1,j)}g_{k_2k_3}\ldots m_{k_{n}k_{n}}^{(i,k_1,k_2,\ldots,k_{n-1},j)}g_{k_{n} j}$, for $\gamma={(ik_1 \dots k_{n}j)}$, the number $ |\gamma\cap\gamma'|$ of joint edges determines the size of each summand according to the bounds \eqref{eq:ind1} and \eqref{eq:ind2}: The last term of the latter is straightforward and of order $O(N^{-1})$. As for the term on the second line, we obtain that for $|\gamma\cap\gamma'| = u $,
		\[\bE   (\partial_{g_{ik_1}} m_{ii}) (\partial_{g_{ik_1'}} m_{ii})    w^{(i)}(\gamma)w^{(i)}(\gamma')\leq C N^{u-2n}.\] 
Since $|\Gamma_{n+1}^{ij}| \leq  N^{n}$ and $ |\{\gamma'\in \Gamma_{n+1}^{ij}: |\gamma\cap\gamma'|=u\}| \leq N^{n-u}$ for fixed $\gamma\in \Gamma_{n+1}^{ij}$, we get
		\[ |\Sigma_{1}^{(n)}|^2\leq  \sum_{u=0}^{n-1} CN^{u-2n-1} \big|  \big\{ \gamma,  \gamma'  \in \Gamma_{n+1}^{ij}: |\gamma\cap\gamma'|=u \big\}\big|+CN^{-1} \leq CN^{-1} \]
 for some $C= C_{n}>0$ that depends on $n\leq M$, but that is independent of $N$. Recalling that $\| \partial_l \partial_n \widetilde{V}\|_{\infty}\leq C$, the same argument shows that $ |\Sigma_{2}^{(n)}|^2\leq CN^{-1}$. 
 
 Next, let us consider $\Sigma_{3}^{(n)}$ and let us compare it with the $n$-th summand in the first term on the \rhs in \eqref{eq:Stein-tr}: then, we have that
 		\[\begin{split}
		&\Sigma_{3}^{(n)} -  \big(m_{ii} {m_{jj}^{(i)}}\big)^2 t (t\mu)^{n} \big[\partial_{n}^2 V\big]  \big(\tbf{X}^{(M)}\big) \\
		& =    t\, \bE \big (m_{ii} m_{jj}^{(i)}\big)^2 \big[ \partial_n^2 V\big] \big(\tbf{X}^{(M)}\big) \bigg(\sum_{\substack{\gamma \in \Gamma^{ij}_{n+1}   }}     (w^{(i)}(\gamma))^2   - (t\mu)^{n} \bigg)  \\
		&\hspace{0.2cm}+ (1-\delta_{n0})\sum_{l=1}^M  \!\!\!\!\!\!\!\sum_{\substack{\gamma =(ik_1\ldots k_nj)\in \Gamma^{ij}_{n+1}, \\ \gamma' =(ik_1 k_2'\ldots k_l'j) \in \Gamma^{ij}_{l+1}: \gamma \neq \gamma'  }} \!\!\!\!\!\!\!\!\!\!\!\!\!\!\!\! t\, \bE \big (m_{ii} m_{jj}^{(i)}\big)^2  w^{(i)}(\gamma) w^{(i)}(\gamma')   \big[ \partial_{l}\partial_n V\big] \big(\tbf{X}^{(M)}\big) = \Sigma_{31}^{(n)}\!+\!\Sigma_{32}^{(n)}.  
		\end{split}\]
The error terms $ \Sigma_{31}^{(n)}$ and $ \Sigma_{32}^{(n)}$ can be controlled through simple second moment estimates as before. Recalling that $ \| \partial_s \partial_t V\|_\infty\leq \frac12 \| \partial_s \partial_tf\|_\infty  $, we have on the one hand that 
		\[\begin{split}
		|\Sigma_{31}^{(n)}|^2 &\leq  C\bE  \bigg( \sum_{\substack{\gamma,\gamma' \in \Gamma^{ij}_{n+1}   }}     (w^{(i)}(\gamma))^2 (w^{(i)}(\gamma'))^2    - 2 (t\mu)^{n} \sum_{\substack{\gamma \in \Gamma^{ij}_{n+1}   }}     (w^{(i)}(\gamma))^2   + (t\mu)^{2n} \bigg). 
		\end{split}\]
Applying \eqref{eq:ind1} \& \eqref{eq:ind2}, that $ |\Gamma_{n+1}^{ij}| = N^{n}+O(N^{n-1})$ and Lemma \ref{lm:m-ind}, we find that
 		\[\begin{split} \bE  \sum_{\substack{\gamma  \in \Gamma^{ij}_{n+1}   }}     (w^{(i)}(\gamma))^2 &=  \bigg[\Big( \frac tN \Big)^{n}  \Big(\big( E \,\text{sech}^{4} \big(h+\sqrt{tq}Z \big) \big)^n+O(N^{-1/4})\Big) + O(N^{-n-1})\bigg] |\Gamma_{n+1}^{ij}| \\
		& = (t\mu)^n + O(N^{-1/4}).
		 \end{split}\]
and, similarly, that
		\[\begin{split}
		&\bE \sum_{\substack{\gamma,\gamma' \in \Gamma^{ij}_{n+1}   }}     (w^{(i)}(\gamma))^2 (w^{(i)}(\gamma'))^2 \\
		 &=     \sum_{\substack{\gamma,\gamma' \in \Gamma^{ij}_{n+1}:\\ \gamma\cap\gamma' =\emptyset   }}  \bE   (w^{(i)}(\gamma))^2 (w^{(i)}(\gamma'))^2 +\sum_{u=1}^{n+1}  \sum_{\substack{\gamma,\gamma' \in \Gamma^{ij}_{n+1}:\\|\gamma\cap\gamma'|=u   }}  \bE   (w^{(i)}(\gamma))^2 (w^{(i)}(\gamma'))^2\\
		& = (t\mu)^{2n}+ O(N^{-1/4}) + \sum_{u=1}^{n-1} O\big( N^{2n-u} \times N^{-2n} \big) = (t\mu)^{2n}+ O(N^{-1/4}).
		\end{split}\]
Here, the scaling $O( N^{2n-u} \times N^{-2n} )$ of the $u$-th summand in the error is obtained by multiplying the number of paths $ |\{\gamma,\gamma'\in \Gamma^{ij}_{n+1}: |\gamma\cap\gamma'|=u\}| = O(N^{2n-u})$ with the leading order decay in $N$ of $\bE  (w^{(i)}(\gamma))^2 (w^{(i)}(\gamma'))^2$: if $ |\gamma\cap\gamma'|=u$, at least $ u-1$ Gaussian edge weights in the product $ (w^{(i)}(\gamma))^2 (w^{(i)}(\gamma'))^2$ have multiplicity four in which case $ 2n-2(u-1)$ edge weights have multiplicity two, leading to an overall $ O( N^{-2(u-1)})\times  O(N^{-2(n-u+1)}) =  O(N^{-2n})$ decay. On the other hand, there can be at most $u$ edge weights having multiplicity four in $ (w^{(i)}(\gamma))^2 (w^{(i)}(\gamma'))^2$ in which case one obtains similarly that $\bE  (w^{(i)}(\gamma))^2 (w^{(i)}(\gamma'))^2 = O( N^{-2u})\times O(N^{-2(n-u)}) =  O(N^{-2n})$. Collecting the previous observations, we conclude that $ |\Sigma_{31}^{(n)}|^2\leq C N^{-1/4}$. 

Finally, let us switch to $\Sigma_{32}^{(n)} $ and $ \Sigma_{4}^{(n)}$; both terms can be controlled with similar arguments as before, so let us only provide a few details for $\Sigma_{32}^{(n)} $. Fixing $ l\in \{1,\ldots, M\}$, it turns out useful to split the $l$-th summand in the definition of $\Sigma_{32}^{(n)} $ into 
		\[\begin{split}  \sum_{u=0}^{\min(n, l)} \sum_{\substack{\gamma =(ik_1\ldots k_nj)\in \Gamma^{ij}_{n+1}, \\ \gamma' =(ik_1 k_2'\ldots k_l'j) \in \Gamma^{ij}_{l+1}: \\ \gamma \neq \gamma', |\gamma\cap\gamma'|=u  }} \!\!\!\!\!\!\!\!\!  \bE \big (m_{ii} m_{jj}^{(i)}\big)^2  w^{(i)}(\gamma) w^{(i)}(\gamma')   \big[ \partial_{l}\partial_n V\big] = \sum_{u=0}^{\min(n, l)} \Sigma_{32u}^{(nl)}  \end{split}\]
and controlling each summand separately; considering for concreteness $  \Sigma_{320}^{(n)}$, for instance, we apply Cauchy-Schwarz and get 
		\[\begin{split}
		|\Sigma_{320}^{(nl)}|^2&\leq   \sum_{\substack{\gamma =(ik_1\ldots k_nj)\in \Gamma^{ij}_{n+1}, \\ \gamma' =(ik_1 k_2'\ldots k_l'j) \in \Gamma^{ij}_{l+1}: \gamma \cap \gamma' =\emptyset }} \sum_{\substack{\tau =(i p_1\ldots p_nj)\in \Gamma^{ij}_{n+1}, \\ \tau' =(ip_1 p_2'\ldots p_l'j) \in \Gamma^{ij}_{l+1}: \tau \cap \tau'=\emptyset  }} \!\!\!\!\!\!\!\!\!  \bE \,  w^{(i)}(\gamma) w^{(i)}(\gamma') w^{(i)}(\tau) w^{(i)}(\tau')\\
		&\leq  \sum_{\substack{1\leq k \leq N: \\k\neq i,j }}\sum_{\substack{\gamma\in\Gamma^{kj}_{n}, \gamma' \in \Gamma^{kj}_{l}: \gamma \cap \gamma' =\emptyset,  \\ \tau\in \Gamma^{kj}_{n}, \tau'  \in \Gamma^{kj}_{l}: \tau \cap \tau' = \emptyset   }} \bE \,  w (\gamma) w (\gamma') w (\tau) w (\tau') \\
		&\hspace{0.5cm} +   \sum_{\substack{1\leq k\neq p\leq N: \\k,p\neq i,j }} \sum_{\substack{\gamma\in\Gamma^{kj}_{n}, \gamma' \in \Gamma^{kj}_{l}: \\ \gamma \cap \gamma' =\emptyset  }} \sum_{\substack{\tau\in \Gamma^{pj}_{n}, \tau'  \in \Gamma^{pj}_{l}:  \\ \tau \cap \tau' =\emptyset  }}  \bE \,  w (\gamma) w (\gamma') w (\tau) w (\tau') .
		\end{split} \]
Now, the bounds \eqref{eq:ind1}, \eqref{eq:ind2}, Lemma \ref{lm:m-ind} and basic combinatorics as above imply that 
		\[\begin{split}
		& \sum_{\substack{1\leq k \leq N: \\k\neq i,j }}\sum_{\substack{\gamma\in\Gamma^{kj}_{n}, \gamma' \in \Gamma^{kj}_{l}: \gamma \cap \gamma' =\emptyset,  \\ \tau\in \Gamma^{kj}_{n}, \tau'  \in \Gamma^{kj}_{l}: \tau \cap \tau' = \emptyset   }} \bE \,  w (\gamma) w (\gamma') w (\tau) w (\tau')\\
		&\leq   \sum_{\substack{1\leq k \leq N: \\k\neq i,j }}\sum_{u=0}^{n+l}  \sum_{\substack{\gamma,\tau \in\Gamma^{kj}_{n}; \gamma',\tau' \in \Gamma^{kj}_{l}:\\ \gamma \cap \gamma' =\emptyset, \tau \cap \tau' = \emptyset, \\ |(\gamma\cup \gamma') \cap (\tau\cup\tau')|=u }} \bE \,  w (\gamma) w (\gamma') w (\tau) w (\tau') \\
		& =  (N-2) \sum_{u=0}^{n+l} O\big( N^{2(n+l-2)-(u-2)}\times N^{u - 2 (n+l)} \big)\leq CN^{-1}.
		\end{split}\]
		The $N^{-(u-2)}$ scaling is due to the fact that is enough to fix $n-1$ edges to have $\gamma=\tau$ (the same for $\gamma',\tau'$). Analogously, one shows that 
		\[\begin{split}
		  \sum_{\substack{1\leq k\neq p\leq N: \\k,p\neq i,j }} \sum_{\substack{\gamma\in\Gamma^{kj}_{n}, \gamma' \in \Gamma^{kj}_{l}: \\ \gamma \cap \gamma' =\emptyset  }} \sum_{\substack{\tau\in \Gamma^{pj}_{n}, \tau'  \in \Gamma^{pj}_{l}:  \\ \tau \cap \tau' =\emptyset  }}  \bE \,  w (\gamma) w (\gamma') w (\tau) w (\tau')\leq CN^{-1}
		\end{split}\]
such that $|\Sigma_{320}^{(nl)}|\leq CN^{-1/2} $. The same arguments imply that $|\Sigma_{32u}^{(nl)}|\leq CN^{-1/2} $ for $u\geq 1$ \st $|\Sigma_{32}^{(n)}|\leq CN^{-1/2} $; finally, as already mentioned, a similar analysis implies $ |\Sigma_{4}^{(n)}|\leq CN^{-1/2}$.

Collecting the above estimates, we have proved that the contribution on \rhs in \eqref{eq:Stein-tr} is of the order $ O(N^{-1/4})$, which concludes the proof.
\end{proof}

\vspace{0.2cm}
\noindent\textbf{Acknowledgements.}  C. B. and A. S. acknowledge support by the Deutsche Forschungsgemeinschaft (DFG, German Research Foundation) under Germany’s Excellence Strategy – GZ 2047/1, Projekt-ID 390685813.


\end{document}